\documentclass[12pt,reqno]{amsart}
\usepackage{amssymb,amsmath,amsthm,amsfonts,fancyhdr}
\usepackage{tikz}
\usepackage{indentfirst}
\usepackage{mathrsfs}
\usepackage{setspace}
\usepackage{color}
\usepackage{cite}
\usepackage{bm}
\usepackage{geometry}
 \geometry{a4paper,left=3cm,right=3cm,top=3cm,bottom=3cm}
\linespread{1.15}
\numberwithin{equation}{section}

\newtheorem{thm}{Theorem}[section]
\newtheorem{defn}[thm]{Defnition}
\newtheorem{lemma}[thm]{Lemma}

\newtheorem{re}[thm]{Remark}

\renewcommand{\det}{\mbox{det}}

  \numberwithin{equation}{section}
  \numberwithin{figure}{section}

\begin{document}
\title[quasiconformal mappings and a Bernstein type theorem]{quasiconformal mappings and a Bernstein type theorem over exterior domains in $\mathbb{R}^2$}
\author{Dongsheng Li}
\author{Rulin Liu}
\begin{abstract}
  We establish the H\"{o}lder estimate and the asymptotic behavior at infinity
   for $K$-quasiconformal mappings over exterior domains in $\mathbb{R}^2$.
    As a consequence, we prove an exterior Bernstein type theorem for fully nonlinear uniformly elliptic equations of second order in $\mathbb{R}^2$.
\end{abstract}
\footnotetext{\textit{Key words and phrases.} Quasiconformal Mappings, Exterior Bernstein Type Theorem, Fully Nonlinear Elliptic Equations, Asymptotic Behavior.\newline}
\footnotetext{This research is supported by NSFC 12071365.\newline}
\footnotetext{Dongsheng Li\newline
              lidsh@mail.xjtu.edu.cn\newline
              School of Mathematics and Statistics, Xi'an Jiaotong University, Xi'an, P.R.China 710049.\newline}
\footnotetext{Rulin Liu\newline
              lrl001@stu.xjtu.edu.cn\newline
              School of Mathematics and Statistics, Xi'an Jiaotong University, Xi'an, P.R.China 710049.}
\maketitle
\section{Introduction}
\indent For Bernstein type theorems for fully nonlinear elliptic equations, a famous theorem of J\"{o}rgens \cite{JO} asserts that any solution of the Monge-Amp\`{e}re equation
\begin{equation}
 \label{eq:M-A}
  \det{D^2u}=1
\end{equation}
in $\mathbb{R}^2$ is a quadratic polynomial. This result was proved to be valid in higher dimensions by Calabi ($n\leq 5$ \cite{CA}) and Pogolove ($n\geq 2$ \cite{PO}) for convex $u$. The extension to the classical theorem of J\"{o}rgens, Calabi and Pogorelov above follows. In 2003, Caffarelli and Li\cite{C-LI} proved that for $n\geq 3$, any convex viscosity solution of the Monge-Amp\`{e}re equation (\ref{eq:M-A}) outside a bounded open subset of $\mathbb{R}^n$ approaches a quadratic polynomial near infinity and for $n=2$, any viscosity solution tends to a quadratic polynomial plus a logarithm term, where for the later case, Ferrer, Mart\'{i}nez and Mil\'{a}n \cite{F-F-M} obtained the same result in 1999. For the case of half space $\mathbb{R}^n_+$, Savin \cite{SA} established the Bernstein type theorem for equation (\ref{eq:M-A}) in 2014, and later, in 2020, Jia, Li and Li\cite{JIA-LI-LI} extended this theorem to exterior domains in half space.\newline
\indent By the virtue of the Evans-Krylov estimate, we can see that for $n\geq3$, any smooth entire solution of the general fully nonlinear elliptic equation
\begin{eqnarray}
  \label{eq:F(D^2u)}
  F\left(D^2u(x)\right)=0
\end{eqnarray}
in $\mathbb{R}^n$ is a quadratic polynomial if we assume the concavity of $F$ and the boundedness of the Hessian $D^2u$. For $n=2$, the same conclusion follows from the Nirenberg estimate \cite{NI} and the boundedness of $D^2u$ without the concavity of $F$.\newline
\indent In 2020, Li, Li and Yuan\cite{LI-LI-Y} established a higher dimensional exterior Bernstein type theorem for the fully nonlinear elliptic equation (\ref{eq:F(D^2u)}), namely, for $n\geq 3$, the solution of (\ref{eq:F(D^2u)}) in $\mathbb{R}^n\setminus\bar{B}_1(0)$ tends to a quadratic polynomial as $|x|\to\infty$ if $F$ is convex (or concave or the level set of $F$ is convex) and $D^2u$ is bounded. As applications of this theorem, the authors obtained the exterior Bernstein type theorems of Monge-Amp\`{e}re equations, special Lagrangian equations, quadratic Hessian equations and inverse harmonic Hessian equations for $n\geq 3$. As for $n=2$, the authors studied these three specific equations one by one to obtain the corresponding exterior Bernstein type theorem instead of establishing the general theorem to equation (\ref{eq:F(D^2u)}). Indeed, the method in \cite{LI-LI-Y} does not work for two dimensional problems. Roughly speaking, there are two steps in \cite{LI-LI-Y} to establish the exterior Bernstein type theorem.
First, by the concavity of $F$ and the boundedness of $D^2u$, the authors made use of the Evans-Krylov estimate and the weak Harnack inequality to show the existence of the limit $A$ of $D^2u$ at infinity, which actually holds for all $n\geq 2$. Second, it is crucial to get the decay rate of $|D^2u-A|$ as $|x|\rightarrow\infty$. This can be done by using barrier functions as $n\geq 3$ while unfortunately, such barrier does not exist as $n=2$.
\newline
\indent In this paper, we establish the exterior Bernstein type theorem for fully nonlinear elliptic equation (\ref{eq:F(D^2u)}) in $\mathbb{R}^2$ by using $K$-quasiconformal mappings. The main result goes as the following.
\begin{thm}
 \label{thm:1}
 Let $u$ be a viscosity solution of (\ref{eq:F(D^2u)}) in the exterior domain
 $\mathbb{R}^2\setminus \bar{\Omega}$, where $F\in C^{1,1}$ is a fully nonlinear uniformly elliptic operator with ellipticity constants $\lambda$ and $\Lambda$, and $\Omega$ is a bounded domain of $\mathbb{R}^2$. If
 $\left\| D^2u\right\|_{L^{\infty}(\mathbb{R}^2\setminus \bar{\Omega})}\leq M< +\infty$,
then there exists a unique symmetric matrix $A\in \mathbb{R}^{2\times 2}$, $b,e\in \mathbb{R}^2, c,d\in\mathbb{R}$
 such that for any $0<\alpha<1$,
 $$u(x)=\frac 12 x^{\mathrm{T}}Ax+b\cdot x+d\log{|x|}+c+e\frac{x}{|x|^2}+O\left(|x|^{-1-\alpha}\right)\text{~as~}|x|\to\infty,$$
 where
\begin{equation}
 \label{value of d}
 d=\frac {1}{2\pi}\left(\int\limits_{\partial\Omega} u_{\nu}\mathrm{d}s+\iint\limits_{\mathbb{R}^2\setminus\bar{\Omega}} (\Delta u(x)-\mathrm{tr}A)\mathrm{d}x_1\mathrm{d}x_2-\mathrm{tr}A|\Omega|\right),
\end{equation}
 $\nu$ is the unit outward normal of the boundary $\partial\Omega$.
 Furthermore, if $F$ is smooth, then we have
 $$\left|D^k\left(u(x)-\frac 12 x^{\mathrm{T}}Ax-b\cdot x-d\log{|x|}-c-e\frac{x}{|x|^2}\right)\right|=O\left(|x|^{-1-\alpha-k}\right)\text{~as~}|x|\to\infty$$
 for all $k\in\mathbb{N}$.
\end{thm}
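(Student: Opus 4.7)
The plan is to linearize the fully nonlinear equation by differentiation and then apply the H\"older estimate and asymptotic expansion at infinity for $K$-quasiconformal mappings on exterior domains proved in the earlier sections of the paper. First I would establish the existence of a symmetric matrix $A$ with $F(A)=0$ and $D^2u(x)\to A$ as $|x|\to\infty$. The argument of \cite{LI-LI-Y} for this Hessian limit --- combining the Nirenberg / Evans--Krylov interior $C^{2,\alpha}$ estimate for $F(D^2u)=0$ with a weak Harnack iteration on dyadic annuli --- is dimension-free and uses no barrier. Setting $w(x):=u(x)-\tfrac12 x^{\mathrm T}Ax$, $w$ is a viscosity solution of $\tilde F(D^2w):=F(D^2w+A)-F(A)=0$ with $\tilde F$ uniformly elliptic, $\tilde F(0)=0$, and $D^2w(x)\to 0$ at infinity.

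Next, each partial derivative $v_k:=w_{x_k}$, $k=1,2$, is a viscosity solution of the linear uniformly elliptic equation $a_{ij}(x)D_{ij}v_k=0$ in $\mathbb{R}^2\setminus\bar\Omega$, where the coefficients $a_{ij}(x):=\int_0^1 F_{p_ip_j}(tD^2w(x)+A)\,dt$ satisfy $a_{ij}(x)\to F_{p_ip_j}(A)$ at infinity. In two dimensions, by the Bers--Nirenberg representation, such a solution factors as $v_k=h_k\circ\chi^{-1}$ for some $K=K(\lambda,\Lambda)$-quasiconformal self-homeomorphism $\chi$ of a neighborhood of infinity and some function $h_k$ harmonic on its image. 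The exterior H\"older estimate and asymptotic expansion for $K$-quasiconformal maps proved in the earlier sections give $\chi(x)=Bx+O(|x|^{-\alpha})$ for some $B\in\mathrm{GL}(2,\mathbb{R})$, so that pulling back the classical multipole expansion of $h_k$ produces
$$v_k(x)=\beta_k+\frac{\eta^{(k)}\cdot x}{|x|^2}+O\bigl(|x|^{-1-\alpha}\bigr),\qquad k=1,2,$$
for some $\beta_k\in\mathbb{R}$ and $\eta^{(k)}\in\mathbb{R}^2$. Since the $v_k$ are partial derivatives of the single-valued function $w$, mixed-partial compatibility $\partial_2 v_1=\partial_1 v_2$ together with the vanishing of the period of the $1$-form $v_1\,\mathrm{d}x_1+v_2\,\mathrm{d}x_2$ around a large circle force $\eta^{(1)}_2=\eta^{(2)}_1=0$ and $\eta^{(1)}_1=\eta^{(2)}_2=:d$. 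Integrating in $x$ and pushing the expansion one more order then yields
$$u(x)=\tfrac12 x^{\mathrm T}Ax+b\cdot x+d\log|x|+c+e\cdot\frac{x}{|x|^2}+O\bigl(|x|^{-1-\alpha}\bigr)$$
for some $b,e\in\mathbb{R}^2$ and $c\in\mathbb{R}$.

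The formula (\ref{value of d}) for $d$ is then read off by applying the divergence theorem to $\Delta w=\Delta u-\mathrm{tr}A$ on the annular region $B_R\setminus\bar\Omega$ and letting $R\to\infty$: the outer boundary integral $\int_{\partial B_R}\nabla w\cdot\tfrac{x}{|x|}\,\mathrm{d}s$ converges to $2\pi d$ (the $b$-part averages to zero over the circle and the dipole decays), while the inner boundary integral equals $-\int_{\partial\Omega}u_\nu\,\mathrm{d}s+\mathrm{tr}A\cdot|\Omega|$ after unwinding the subtraction of $\tfrac12 x^{\mathrm T}Ax$. Uniqueness of $(A,b,c,d,e)$ is immediate from the expansion. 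When $F$ is smooth, a standard Schauder bootstrap on the linear equations satisfied by higher-order derivatives of $w$ upgrades the leading-order estimate to the claimed $D^k$-bounds for every $k\in\mathbb{N}$.

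The main technical obstacle, and the reason \cite{LI-LI-Y} could not obtain the general two-dimensional result, is the second step: extracting from only the $L^\infty$-smallness of $D^2w$ and the near-constancy of $(a_{ij})$ a sharp asymptotic expansion of $v_k$ with a polynomial H\"older remainder, including the dipole coefficient that ultimately produces the $\log|x|$ term in $u$. The barrier-function technique used in \cite{LI-LI-Y} for $n\geq 3$ has no analogue in dimension two, and its role is played here by the exterior $K$-quasiconformal asymptotic theory supplied by the earlier sections of the paper.
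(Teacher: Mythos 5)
Your overall strategy --- differentiate, use the exterior two--dimensional elliptic theory to extract a Hessian limit and then expand --- is the right idea, and your treatment of the coefficient $d$ via the divergence theorem and the final Schauder bootstrap coincide with the paper's. But the central step of your argument, the one that is supposed to replace the missing barrier, is not supported by the results of Section~2 and is different from what the paper actually does; as written it has several gaps.

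The decisive issue is your invocation of a Bers--Nirenberg factorization $v_k=h_k\circ\chi^{-1}$ together with the claim that ``the exterior H\"older estimate and asymptotic expansion \ldots proved in the earlier sections give $\chi(x)=Bx+O(|x|^{-\alpha})$.'' Theorem~\ref{thm:Holder of exterior q-c} applies to a \emph{bounded} exterior $K$-quasiconformal pair $w=(p,q)$ and yields $|w(x)-w(\infty)|\le C|x|^{-\alpha}$; it says nothing about a quasiconformal \emph{homeomorphism} $\chi$ of a neighborhood of infinity, which is unbounded, and the quantitative asymptotic $\chi(x)=Bx+O(|x|^{-\alpha})$ (with a further control on $D\chi$ needed to pull back a multipole expansion) is a substantially stronger statement than anything established in Section~2. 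The paper deliberately avoids any such representation theorem: it applies Theorem~\ref{thm:Holder of exterior q-c}/\ref{thm:exterior for equation} directly to the \emph{gradient} of the differentiated equation to obtain $|D^2u-A|\le C|x|^{-\alpha}$ (Theorem~\ref{thm:limit of D^2u}), and from that point on it never returns to quasiconformal mappings; instead it runs a freezing-of-coefficients argument (Lemmas~\ref{le:smoothness}, \ref{le:iterate}, \ref{le:expansion}) comparing $\varphi=u-\tfrac12 x^{\mathrm T}Ax$ to a genuine harmonic function and iterating.

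A second gap is the exponent. Even granting the factorization and the asymptotic for $\chi$, a single application gives a fixed $\alpha_0=K-\sqrt{K^2-1}$ coming from the ellipticity ratio, and integrating $Dw$ with error $O(|x|^{-1-\alpha_0})$ only produces $u=\tfrac12 x^{\mathrm T}Ax+b\cdot x+d\log|x|+c+O(|x|^{-\alpha_0})$; you do not yet see the $e\cdot x/|x|^2$ term, and the remainder is far from the claimed $O(|x|^{-1-\alpha})$ for \emph{every} $\alpha\in(0,1)$. You acknowledge ``pushing the expansion one more order,'' but that is exactly where the work is: one must show that the decay of $D^2u-A$ improves the decay of $a_{ij}-\delta_{ij}$, which in turn improves the decay of $\Delta\varphi$, and iterate (Step~1 and the small-$\delta$ bookkeeping in the paper), ultimately reaching the $W^{2,p}$/$C^{1,\alpha}$ regime in a Kelvin-transformed disc where the arbitrary $\alpha=1-2/p$ appears (Step~4). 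Without that iteration the theorem you prove is strictly weaker than the one stated.

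Finally, two smaller points. Citing ``Evans--Krylov'' for the Hessian limit is misleading here: that estimate requires concavity of $F$, which this theorem explicitly does not assume (Remark~1.2); in two dimensions one uses the Nirenberg $C^{2,\alpha}$ estimate, as the paper does. And after the factorization, the leading term you pull back is $e^{(k)}\cdot Bx/|Bx|^2$, which equals $\eta^{(k)}\cdot x/|x|^2$ only when $B$ is conformal; that requires first normalizing $F_{M_{ij}}(A)=\delta_{ij}$ by an affine change of variables, a step that should be made explicit. In short, your plan is plausible in spirit but rests on an unproved asymptotic representation theorem for exterior quasiconformal homeomorphisms and omits the iteration that produces both the $e\cdot x/|x|^2$ term and the arbitrary exponent; the paper circumvents both difficulties by using Theorem~\ref{thm:Holder of exterior q-c} only once, on bounded gradients, and then working with the comparison-to-harmonic lemmas.
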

\begin{re}
In Theorem \ref{thm:1}, the concavity (or convexity or convexity of the level set $\{N|F(N)=0\}$) of $F$ is not needed that is however an essential assumption in \cite{LI-LI-Y}.
\end{re}
As aforementioned, we will use $K$-quasiconformal mappings to study equation (\ref{eq:F(D^2u)}) over exterior domains. $K$-quasiconformal mappings play a special role in studying the H\"{o}lder continuity of solutions of two dimensional second order partial differential equations, which was developed by Morrey \cite{MO}, Nirenberg \cite{NI} and Finn and Serrin \cite{F-S}. In this paper, we will demonstrate the asymptotic behavior of $K$-quasiconformal mappings at infinity over exterior  domains (Cf. Theorem \ref{thm:Holder of exterior q-c} in Section 2). By using this result to (\ref{eq:F(D^2u)}) over exterior domains,
we shall not only show $D^2u$ has a limit $A$ at infinity, but  get the decay rate of $|D^2u-A|$ as $|x|\rightarrow\infty$ as well. After this, Theorem 1.1 will be proved by standard arguments.
\newline
\indent The organization of this paper goes as follows. In section 2, we study the H\"{o}lder continuity and asymptotic behavior at infinity of $K$-quasiconformal mappings over exterior domains, which implies the gradient H\"{o}lder estimate and the gradient asymptotic behavior at infinity of solutions of linear elliptic equations over exterior domains. In section 3, we give the proof of Theorem \ref{thm:1}.
\section{Exterior $K$-quasiconformal mappings}
Let's begin with the definition of exterior $K$-quasiconformal mappings in $\mathbb{R}^2\setminus\bar{\Omega}$. We refer to \cite{G-T} for the original definition of $K$-quasiconformal mappings.
\begin{defn}
 \label{def:quasiconformal exterior}
A mapping $w(x)=(p(x),q(x))$ from $\mathbb{R}^2\setminus\bar{\Omega}$ $(\Omega\subset\mathbb{R}^2\mbox{~is bounded~})$ in $x=(x_1,x_2)$ plane to $w=(p,q)$ plane is exterior $K$-quasiconformal in $\mathbb{R}^2\setminus\bar{\Omega}$ if $p, q\in C^1\left(\mathbb{R}^2\setminus\bar{\Omega}\right)$ and
\begin{eqnarray}
 \label{eq:q-c}
 p^2_1+p^2_2+q^2_1+q^2_2\leq 2K\left(p_1q_2-p_2q_1\right)
\end{eqnarray}
holds for all $x\in\mathbb{R}^2\setminus\bar{\Omega}$ with some constant $K>0$, where $p_i=\frac{\partial p(x)}{\partial x_i},q_i=\frac{\partial q(x)}{\partial x_i}, i=1,2$.
\end{defn}
For $K$-quasiconformal mappings, the apriori interior H\"{o}lder estimate is well known (Cf. \cite[Lemma 2]{NI} and \cite[Theorem 1]{F-S}).\newline
\indent For exterior $K$-quasiconformal mappings, we have the following H\"{o}lder estimate over exterior domain and the asymptotic behavior at infinity.
\begin{thm}
 \label{thm:Holder of exterior q-c}
Let $w=(p,q)$ be exterior $K$-quasiconformal in $\mathbb{R}^2\setminus \bar{\Omega}$ $(\Omega\subset\mathbb{R}^2$ is bounded $)$ with $K\geq 1$, and suppose $|w|\leq M$. Then, for any $\Omega^{\prime}\supset\supset\Omega$ with $d=\mathrm{dist}(\Omega,\partial\Omega^{\prime})$,
\begin{equation*}
 \left|w(x)-w(y)\right|\leq C\left|x-y\right|^{\alpha},x,y\in\mathbb{R}^2\setminus\overline{\Omega^{\prime}}.
\end{equation*}
and $w(x)$ tends to a limit $w(\infty)$ at infinity such that
\begin{equation}
 \label{asymptotic of exterior q-c}
 \left|w(x)-w(\infty)\right|\leq C|x|^{-\alpha}\mbox{~for any~}x\in\mathbb{R}^2\setminus\overline{\Omega^{\prime}},
\end{equation}
where $\alpha=K-(K^2-1)^{\frac 12}$, $C$ depends only on $K, d$ and $M$.
\end{thm}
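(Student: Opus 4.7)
My plan is to reduce the exterior estimate to the classical interior Hölder estimate for $K$-quasiconformal mappings (cf.\ \cite{NI,F-S}) via the Kelvin inversion, which is a conformal diffeomorphism of $\mathbb{R}^2\setminus\{0\}$ onto itself. I would fix $R_0$ with $\bar\Omega\subset B_{R_0}$ and define $\tilde w(y) := w(y/|y|^2)$, composed if necessary with a coordinate reflection to preserve orientation, on the image $T(\mathbb{R}^2\setminus\bar\Omega)$ under inversion; this image contains the punctured disk $B_{1/R_0}\setminus\{0\}$. A direct chain-rule computation, exploiting the fact that the differential of the inversion is a scalar multiple of an orthogonal matrix, shows that $\tilde w$ satisfies (\ref{eq:q-c}) with the same constant $K$ on its domain, and plainly $|\tilde w|\leq M$.

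To obtain the asymptotic behavior, I would apply the classical interior Hölder estimate to $\tilde w$ on dyadic annuli $A_k := \{2^{-k-1} < |y| < 2^{-k}\}$, rescaling each one to a unit annulus. This yields $\osc(\tilde w, A_k) \leq C (2^{-k})^\alpha$ with the sharp exponent $\alpha = K-(K^2-1)^{1/2}$, and by combining neighbouring annuli, $\osc(\tilde w, B_r\setminus\{0\}) \leq C r^\alpha$ for all small $r$. Consequently the limit $\tilde w(0) := \lim_{y\to 0}\tilde w(y)$ exists with $|\tilde w(y)-\tilde w(0)|\leq C|y|^\alpha$. Setting $w(\infty):=\tilde w(0)$ and using $|T(x)|=1/|x|$ immediately yields the asymptotic decay (\ref{asymptotic of exterior q-c}).

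Next, I would extend $\tilde w$ continuously to $\{0\}$ by its limit; the removability of isolated singularities for bounded quasiconformal mappings makes this extension $K$-quasiconformal across $0$. Since $\Omega'\supset\supset\Omega$, the set $T(\mathbb{R}^2\setminus\overline{\Omega'})\cup\{0\}$ lies at positive distance $d'$ (controlled by $d$ and $\diam\Omega$) from $T(\partial\Omega)$, so the classical interior estimate applies to the extended $\tilde w$ there. Translating back via the identity
\begin{equation*}
|T(x_1)-T(x_2)| \,=\, \frac{|x_1-x_2|}{|x_1|\,|x_2|},
\end{equation*}
gives $|w(x_1)-w(x_2)|\leq C|x_1-x_2|^\alpha$ whenever $|x_1|,|x_2|\geq R_0$. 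For pairs of points in the bounded region $(\mathbb{R}^2\setminus\overline{\Omega'})\cap\bar B_{2R_0}$, the classical interior Hölder estimate applied directly to $w$ (using the boundary distance $d$) handles the estimate. For mixed pairs, I would interpolate through an intermediate point on $\{|x|=R_0\}$, or simply invoke the trivial bound $|w(x_1)-w(x_2)|\leq 2M\leq 2MR_0^{-\alpha}|x_1-x_2|^\alpha$ when $|x_1-x_2|$ is not small.

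The main technical obstacle I anticipate is tracking the sharp Hölder exponent $\alpha = K-(K^2-1)^{1/2}$ through the annular iteration near the origin, since a naive annulus-by-annulus application of the interior estimate followed by summation could degrade this exponent unless the iteration is set up correctly. The exponent ultimately comes from the Morrey--Nirenberg--Finn--Serrin interior estimate and is invariant under the conformal Kelvin inversion; a secondary but necessary point is to verify carefully the invariance of the quasiconformality inequality (\ref{eq:q-c}) under the inversion, which amounts to a short chain-rule computation together with the orientation correction noted above.
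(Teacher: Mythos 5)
Your overall route is the same as the paper's: invert via $\tilde w(y)=w(y/|y|^2)$ to a punctured disk, fix the orientation, and then apply the interior H\"older theory for $K$-quasiconformal mappings. The paper's Lemma \ref{le:transform} does the inversion and handles the orientation reversal by swapping the components, i.e.\ taking $\tilde w=(\tilde q,\tilde p)$, which is the ``coordinate reflection'' you mention; and the paper's Lemma \ref{le:interior Holder of q-c} is Finn--Serrin's Theorem~3, which says exactly that a bounded $K$-quasiconformal map with an isolated singularity extends continuously and obeys the sharp H\"older estimate with exponent $\alpha=K-(K^2-1)^{1/2}$ up to and through the singularity. That single citation does all the work at the origin; after that one transfers back to the exterior domain exactly as you describe.

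The one genuine gap is your dyadic-annulus step. The interior H\"older estimate is scale invariant: on $A_k=\{2^{-k-1}<|y|<2^{-k}\}$, after rescaling to unit size it yields $|\tilde w(x)-\tilde w(y)|\le C\,(2^{k}|x-y|)^{\alpha}$ for $x,y\in A_k$, hence $\osc(\tilde w,A_k)\le C$ with no geometric decay in $k$; the asserted bound $\osc(\tilde w,A_k)\le C\,2^{-k\alpha}$ does not follow from a naive application of the interior estimate on each annulus, and you flag this obstacle without resolving it. Finn--Serrin's proof of the singularity result is \emph{not} a straightforward annulus-by-annulus iteration of the oscillation estimate; it uses the finiteness of the Dirichlet integral near the puncture to drive the decay and preserve the sharp exponent. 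Your third paragraph (``removability of isolated singularities\ldots then interior estimate'') is the right substitute, but you should recognise that step as the nontrivial input and cite Finn--Serrin Theorem~3 for it, rather than treating removability as a soft fact; otherwise neither the existence of the limit with rate $|y|^\alpha$ nor the sharpness of $\alpha$ is actually established. Your concluding transfer to the exterior via $|T(x_1)-T(x_2)|=|x_1-x_2|/(|x_1|\,|x_2|)$ and the near/far/mixed-pair discussion are fine, though the paper gets the exterior H\"older bound in one stroke from the inverted picture without splitting cases.
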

\begin{re}
The results in Theorem \ref{thm:Holder of exterior q-c} are also valid for $p,q\in W_{\mathrm{loc}}^{1,2}(\mathbb{R}^2\setminus\bar{\Omega})\cap L^{\infty}(\mathbb{R}^2\setminus\bar{\Omega})$.
\end{re}
To prove Theorem \ref{thm:Holder of exterior q-c}, we first state the following H\"{o}lder continuity of $K$-quasiconformal mappings with singularities.
\begin{lemma}[\textnormal{\cite[Theorem 3]{F-S}}]
 \label{le:interior Holder of q-c}
Let $w=(p,q)$ be $K$-quasiconformal in a domain $\Omega$ of $x=
(x_1,x_2)$ plane, except at a set $T$ of isolated points in $\Omega$. Assume $|w|\leq M$. Then $w$ can be defined, or redefined, at the points of $T$ so that the resulting function is continuous in $\Omega$, and in any compact subregion $\Omega^{\prime}$ of $\Omega$ with $d=\mathrm{dist}(\Omega^{\prime}, \partial\Omega)$, $w(x)$ satisfies a uniform H\"{o}lder inequality
\begin{equation}
 \label{interior Holder of q-c}
 |w(x)-w(y)|\leq C|x-y|^{\alpha}, x,y\in\Omega^{\prime},
\end{equation}
where $\alpha=K-(K^2-1)^{\frac 12}$, $C$ depends only on $K, d$ and $M$.
\end{lemma}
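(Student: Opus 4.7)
The statement is the Finn--Serrin interior Hölder estimate for $K$-quasiconformal mappings with isolated singularities. My plan decomposes into two steps: first, prove the pointwise Hölder estimate for $K$-quasiconformal mappings on a domain with no singular set, attaining the sharp exponent $\alpha=K-\sqrt{K^2-1}$; second, show that under the boundedness assumption $|w|\le M$, each isolated singularity is removable and the Hölder estimate propagates across the singular set.

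For the first step, I fix a regular point $x_0\in\Omega\setminus T$ with $\overline{B}_R(x_0)\subset\Omega\setminus T$ and pass to polar coordinates $(r,\theta)$ centered at $x_0$. Setting
\[P(r)=\int_0^{2\pi}(p_r^2+q_r^2)\,d\theta,\quad Q(r)=\int_0^{2\pi}(p_\theta^2+q_\theta^2)\,d\theta,\quad J_c(r)=\int_0^{2\pi}(p_rq_\theta-p_\theta q_r)\,d\theta,\]
integrating the polar form of (2.1) over $\partial B_r(x_0)$ gives $rP(r)+Q(r)/r\le 2KJ_c(r)$, and Cauchy--Schwarz on $J_c$ then yields $rP(r)+Q(r)/r\le 2K\sqrt{P(r)Q(r)}$. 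The algebraic identity $(u+v)^2\le 4K^2uv$ with $u=rP$, $v=Q/r$ pins the ratio $r^2P(r)/Q(r)$ into the interval $[\alpha^2,\alpha^{-2}]$, tight for the radial-stretch extremal $z\mapsto z|z|^{\alpha-1}$. Comparing with this extremal via an ODE argument for the Dirichlet integral $D(r)=\int_{B_r(x_0)}(|\nabla p|^2+|\nabla q|^2)\,dx$ produces the Dirichlet growth $D(r)\le C(r/R)^{2\alpha}D(R)$, and the two-dimensional Campanato embedding converts this into the Hölder bound on $\overline{B}_{R/2}(x_0)$ with constants depending only on $K$, $R$, and $M$.

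For the second step, fix $x_0\in T$ with $\overline{B}_R(x_0)\cap T=\{x_0\}$. The bound $|w|\le M$ together with the quasiconformality $|\nabla p|^2+|\nabla q|^2\le 2KJ$ forces the Dirichlet energy on the punctured disk to be finite. A Courant--Lebesgue pigeon-hole on dyadic annuli $B_{2^{-k}R}(x_0)\setminus B_{2^{-k-1}R}(x_0)$ produces a sequence of concentric circles $\partial B_{\rho_k}$ with $\rho_k\downarrow 0$ on which $Q(\rho_k)\to 0$; since the oscillation of $w$ on each such circle is bounded by $\tfrac12\sqrt{2\pi Q(\rho_k)}$, it tends to zero. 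Coupled with the scaling form of the Hölder estimate from step one (applied on balls of radius proportional to the distance from the singularity), this yields the existence of $\lim_{x\to x_0}w(x)$; defining $w(x_0)$ to be this limit makes $w$ continuous on $\Omega$, and the uniform Hölder constants from step one extend the estimate up to and across $T$.

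The main obstacle is securing the sharp exponent $\alpha=K-\sqrt{K^2-1}$, which is strictly smaller than $1/K$ and reflects the weaker Finn--Serrin notion of quasiconformality (no injectivity assumed). Generic De~Giorgi--Nash--Moser arguments for the elliptic system satisfied by $(p,q)$ yield only a non-sharp exponent; matching the sharp value requires the precise algebraic step above together with the ODE comparison against the extremal radial stretches. A secondary subtlety, specific to the non-injective setting, is that $\int J$ must be interpreted as a multiplicity-weighted image area, so the Dirichlet energy bound used in the removability argument should be obtained from an isoperimetric estimate applied to the image of each circle rather than from the naive bound by $\pi M^2$.
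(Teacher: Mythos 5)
The paper does not prove this lemma; it is quoted directly from Finn--Serrin \cite[Theorem~3]{F-S}, so there is no in-paper argument for you to be compared against. Your sketch is essentially a reconstruction of the Finn--Serrin proof, and the skeleton is right: the polar inequality $rP(r)+Q(r)/r\le 2K\sqrt{P(r)Q(r)}$ forcing $r^2P/Q\in[\alpha^2,\alpha^{-2}]$, the Dirichlet-growth estimate $D(r)\le C(r/R)^{2\alpha}D(R)$, Morrey's Dirichlet-growth theorem to pass to H\"older, and a Courant--Lebesgue selection of shrinking circles to handle the isolated singularity.

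The step you leave implicit, though, is the one that actually produces the sharp exponent, and it is not a cosmetic omission. To run the ODE argument you need $D'(r)/D(r)\ge 2\alpha/r$. The constraint $r^2P/Q\ge\alpha^2$ only gives $D'(r)=rP+Q/r\ge(1+\alpha^2)Q(r)/r$; you still have to control $D(r)$ from above by $Q(r)$. That is precisely the length--area (isoperimetric) inequality $\iint_{B_r}J\le Q(r)/2$, obtained from $\iint_{B_r}J=\tfrac12\oint_{\partial B_r}(p\,dq-q\,dp)$ and the isoperimetric inequality for the closed curve $w(\partial B_r)$, which yields $D(r)\le 2K\iint_{B_r}J\le KQ(r)$. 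Together with the identity $(1+\alpha^2)/K=2\alpha$ this gives exactly the claimed growth rate. You mention the isoperimetric estimate only in the closing caveat about non-injectivity and removability, but it is in fact the engine of \emph{both} of your steps: it replaces the naive bound $\iint J\le\pi M^2$ (which fails without injectivity), it bounds $D(R)$ in terms of $K$ and $M$ so that the constant $C$ has the stated dependence, and it is what makes the exponent $\alpha=K-\sqrt{K^2-1}$ rather than something like $1/K$. Promoting it from an aside to a stated lemma would turn your outline into a correct proof.
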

We prove Theorem \ref{thm:Holder of exterior q-c} by making use of the Kelvin transform. For this purpose, we establish the following lemma, which states that the Kelvin transform of an exterior $K$-quasiconformal mapping is $K$-quasiconformal with an isolated singularity.
\begin{lemma}
 \label{le:transform}
 Let $w=(p,q)$ be exterior $K$-quasiconformal in $\mathbb{R}^2\setminus\bar{B}_1(0)$. Let $\tilde{p}$ and $\tilde{q}$ be the \textit{Kelvin transform} of $p$ and $q$ respectively, namely
 $$\tilde{p}(x)=p\left(\frac x{|x|^2}\right),\tilde{q}(x)=q\left(\frac x{|x|^2}\right), x\in B_1(0)\setminus\{0\}.$$
 Then, $\tilde{w}=(\tilde{q},\tilde{p})$ is $K$-quasiconformal in $B_1(0)\setminus\{0\}$.
\end{lemma}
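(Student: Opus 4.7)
The Kelvin transform $\Phi(x)=x/|x|^2$ is the classical conformal involution of $\mathbb R^2\setminus\{0\}$, and the whole proof is really a chain-rule computation exploiting this conformality, together with one bookkeeping point (orientation reversal) that is exactly what forces us to swap the components from $(\tilde p,\tilde q)$ to $(\tilde q,\tilde p)$.

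\textbf{Step 1: Jacobian of the Kelvin transform.} I would first write $y=\Phi(x)=x/|x|^2$ and compute the Jacobian matrix $J(x):=\partial y/\partial x$. A short calculation gives
\[
J(x)=\frac{1}{|x|^2}\!\left(I-\frac{2\,xx^{\mathrm T}}{|x|^2}\right),
\]
and since the bracketed matrix is an orthogonal reflection, one obtains the two facts that drive everything:
\[
J(x)^{\mathrm T}J(x)=\frac{1}{|x|^4}\,I,\qquad \det J(x)=-\frac{1}{|x|^4}.
\]
The first expresses conformality of $\Phi$; the second records that $\Phi$ reverses orientation.

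\textbf{Step 2: Translate the two sides of the quasiconformality inequality.} For $\tilde p(x)=p(y)$, $\tilde q(x)=q(y)$ the chain rule gives the two-by-two matrix identity $D\tilde w=(Dw)(y)\cdot J(x)$, where $\tilde w=(\tilde p,\tilde q)$. Using $\mathrm{tr}(AB)=\mathrm{tr}(BA)$ and $JJ^{\mathrm T}=|x|^{-4}I$, the Frobenius norm transforms cleanly:
\[
\tilde p_1^2+\tilde p_2^2+\tilde q_1^2+\tilde q_2^2=\frac{1}{|x|^4}\bigl(p_1^2+p_2^2+q_1^2+q_2^2\bigr)\big|_{y}.
\]
Swapping components is multiplication on the left by the orthogonal matrix $S=\bigl(\begin{smallmatrix}0&1\\1&0\end{smallmatrix}\bigr)$, which preserves this quantity, so the same identity holds with $(\tilde q,\tilde p)$ in place of $(\tilde p,\tilde q)$.

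\textbf{Step 3: Handle the Jacobian determinant.} Here is where the sign matters. From $\det J=-|x|^{-4}$ we get
\[
\det D(\tilde p,\tilde q)=-\frac{1}{|x|^4}\bigl(p_1q_2-p_2q_1\bigr)\big|_{y},
\]
which is the \emph{wrong} sign for $(\tilde p,\tilde q)$ to satisfy the quasiconformality inequality. Swapping the components flips this sign, so
\[
\tilde q_1\tilde p_2-\tilde q_2\tilde p_1=\det D(\tilde q,\tilde p)=\frac{1}{|x|^4}\bigl(p_1q_2-p_2q_1\bigr)\big|_{y}.
\]

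\textbf{Step 4: Conclude.} Dividing the inequality $p_1^2+p_2^2+q_1^2+q_2^2\le 2K(p_1q_2-p_2q_1)$ (which holds at $y\in\mathbb R^2\setminus\bar B_1(0)$ because $x\in B_1(0)\setminus\{0\}$) by $|x|^4$ and substituting the identities from Steps 2--3 yields exactly
\[
\tilde p_1^2+\tilde p_2^2+\tilde q_1^2+\tilde q_2^2\le 2K\bigl(\tilde q_1\tilde p_2-\tilde q_2\tilde p_1\bigr)
\]
on $B_1(0)\setminus\{0\}$, which is the defining inequality for $(\tilde q,\tilde p)$ being $K$-quasiconformal there. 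Smoothness is inherited from $w$ and $\Phi$ since $|x|>0$. The only conceptual point to highlight is the need to switch components; everything else is a direct chain-rule computation, and I do not anticipate a real obstacle.
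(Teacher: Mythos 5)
Your proof is correct and is essentially the same argument as the paper's: both are chain-rule computations for the Kelvin transform $x\mapsto x/|x|^2$, showing that the sum of squares scales by $|x|^{-4}$ while the Jacobian determinant scales by $-|x|^{-4}$, so the component swap is exactly what restores the right sign. The paper writes out the chain rule componentwise; you package it in matrix form and explicitly name the two driving facts ($J^{\mathrm T}J=|x|^{-4}I$ for conformality, $\det J<0$ for orientation reversal), which is a cleaner presentation of the identical computation.
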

\begin{proof}
Calculating directly, we have
$$\tilde{p}_1=\left(|x|^{-2}-2x^2_1|x|^{-4}\right)p_1+\left(-2x_1x_2|x|^{-4}\right)p_2,$$
$$\tilde{p}_2=\left(-2x_1x_2|x|^{-4}\right)p_1+\left(|x|^{-2}-2x^2_2|x|^{-4}\right)p_2,$$
$$\tilde{q}_1=\left(|x|^{-2}-2x^2_1|x|^{-4}\right)q_1+\left(-2x_1x_2|x|^{-4}\right)q_2,$$
and
$$\tilde{q}_2=\left(-2x_1x_2|x|^{-4}\right)q_1+\left(|x|^{-2}-2x^2_2|x|^{-4}\right)q_2.$$
It's easy to see that
$$\tilde{p}^2_1+\tilde{p}^2_2+\tilde{q}^2_1+\tilde{q}^2_2=|x|^{-4}\left(p^2_1+p^2_2+q^2_1+q^2_2\right),$$
and
$$\tilde{p}_1\tilde{q}_2-\tilde{p}_2\tilde{q}_1=-|x|^{-4}\left(p_1q_2-p_2q_1\right).$$
Since $w=(p,q)$ is exterior $K$-quasiconformal over $\mathbb{R}^2\setminus\bar{B}_1(0)$, we deduce by Definition \ref{def:quasiconformal exterior} that $p$ and $q$ satisfy (\ref{eq:q-c}) in $\mathbb{R}^2\setminus\bar{B}_1(0)$ for some $K\geq 1$.
So, we obtain that in $B_1(0)\setminus\{0\}$,
$$\tilde{p}^2_1+\tilde{p}^2_2+\tilde{q}^2_1+\tilde{q}^2_2\leq 2K\left(\tilde{p}_2\tilde{q}_1-\tilde{p}_1\tilde{q}_2\right),$$
which implies $\tilde{w}=(\tilde{q},\tilde{p})$ is $K$-quasiconformal in $B_1(0)\setminus\{0\}$.
\end{proof}
\begin{proof}[Proof of Theorem \ref{thm:Holder of exterior q-c}] Assume without loss of generality that $B_1(0)\subset\Omega$. Let $\tilde{p}$ and $\tilde{q}$ be the Kelvin transform of $p$ and $q$ respectively given by Lemma \ref{le:transform}. Let $\hat{\Omega}=\left\{\frac {x}{|x|^2}\Big|x\in\mathbb{R}^2\setminus\bar{\Omega}\right\}$ and for any $\Omega^{\prime}\supset\supset\Omega$, $\tilde{\Omega}=\left\{\frac {x}{|x|^2}\Big|x\in\mathbb{R}^2\setminus\overline{\Omega^{\prime}}\right\}$. Then by Lemma \ref{le:transform}, $\tilde{w}=(\tilde{q},\tilde{p})$ is $K$-quasiconformal in $\hat{\Omega}\setminus\{0\}$ with $K\geq 1$. Since $|w|\leq M$ implies $|\tilde{w}|\leq M$, applying Lemma \ref{le:interior Holder of q-c} to $\tilde{w}$ with $T=\{0\}$, we know that
$$|\tilde{w}(x)-\tilde{w}(y)|\leq C|x-y|^{\alpha}, x,y\in{\tilde{\Omega}},$$
which implies that $\tilde{w}(x)$ has a limit $\tilde{w}(0)$ at $0$ and for all $x\in{\tilde{\Omega}},$
$$\left|\tilde{w}(x)-\tilde{w}(0)\right|\leq C|x|^{\alpha}, \alpha=K-\left(K^2-1\right)^{\frac 12}.$$
Transforming back to exterior domain, we have that
$$|w(x)-w(y)|\leq C|x-y|^{\alpha}, x,y\in\mathbb{R}^2\setminus\overline{\Omega^{\prime}}$$
and $w(x)$ has a limit $w(\infty)=\tilde{w}(0)$ at infinity with
$$\left|w(x)-w(\infty)\right|\leq C|x|^{-\alpha},x\in\mathbb{R}^2\setminus\overline{\Omega^{\prime}},$$
where $\alpha=K-(K^2-1)^{\frac 12}$, $C$ depends only on $K, d$ and $M$.\newline
\indent The theorem is therefore proved.
\end{proof}
Next we consider linear elliptic equation
\begin{equation}
 \label{eq:L}
 L(u)=a_{11}(x)u_{11}(x)+2a_{12}(x)u_{12}(x)+a_{22}(x)u_{22}(x)=0,
\end{equation}
where $L$ is uniformly elliptic, that is, there exist $0<\lambda\leq\Lambda$ such that
\begin{equation}
 \label{uniform elliptic 1}
 \lambda(\xi_1^2+\xi_2^2)\leq a_{11}\xi_1^2+2a_{12}\xi_1\xi_2+a_{22}\xi_2^2\leq \Lambda(\xi_1^2+\xi_2^2), \forall \xi=(\xi_1,\xi_2)\in\mathbb{R}^2
\end{equation}
and
\begin{equation}
 \label{uniform elliptic 2}
 \frac{\Lambda}{\lambda}\leq \gamma
\end{equation}
for some constant $\gamma\geq 1$.\newline
\indent For uniformly elliptic equation (\ref{eq:L}) in a domain $\Omega$ of $\mathbb{R}^2$, it follows from the interior H\"{o}lder estimate of $K$-quasiconformal mappings that its bounded solutions have interior $C^{1,\alpha}$ estimate\cite[Theorem 12.4]{G-T}.\newline
\indent For uniformly elliptic equation (\ref{eq:L}) over exterior domain in $\mathbb{R}^2$, we can establish the gradient H\"{o}lder estimate and the gradient asymptotic behavior of solutions at infinity by the virtue of Theorem \ref{thm:Holder of exterior q-c}.
\begin{thm}
 \label{thm:exterior for equation}
  Let $\Omega$ be a bounded domain of $\mathbb{R}^2$ and $u\in C^2(\mathbb{R}^2\setminus \bar{\Omega})$ be a solution of equation (\ref{eq:L}) in $\mathbb{R}^2\setminus \bar{\Omega}$. Suppose $|Du(x)|\leq M$. Then for any $\Omega^{\prime}\supset\supset\Omega$ with $d=\mathrm{dist}(\Omega,\partial\Omega^{\prime})$,
  \begin{equation*}
   \left|Du(x)-Du(y)\right|\leq C\left|x-y\right|^{\alpha}, x,y\in\mathbb{R}^2\setminus\overline{\Omega^{\prime}}
  \end{equation*}
  and $Du(x)$ has a limit $Du(\infty)$ at infinity with
  \begin{equation}
   \label{asymptotic of Du}
   |Du(x)-Du(\infty)|\leq C|x|^{-\alpha},  x\in\mathbb{R}^2\setminus\overline{\Omega^{\prime}},
  \end{equation}
where $\alpha$ depends only on $\gamma$, $C$ depends only on $\gamma, d$ and $M$.
\end{thm}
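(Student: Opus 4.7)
The plan is to reduce the theorem to Theorem \ref{thm:Holder of exterior q-c} by identifying the gradient $(u_1,u_2)$ of a solution of (\ref{eq:L}) as an exterior $K$-quasiconformal mapping, with $K$ controlled only by the ellipticity ratio $\gamma$. This is the classical two-dimensional observation going back to Morrey, Nirenberg and Finn--Serrin; in the interior setting it underlies the proof of Theorem 12.4 in \cite{G-T}.

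Concretely, I would set $p(x)=u_1(x)$ and $q(x)=u_2(x)$. The equality of mixed partials gives $p_2=q_1$, while equation (\ref{eq:L}) reads
\begin{equation*}
 a_{11}p_1+a_{12}(p_2+q_1)+a_{22}q_2=0.
\end{equation*}
A direct algebraic manipulation using (\ref{uniform elliptic 1}) (which implies, in particular, $a_{11}a_{22}-a_{12}^2\geq \lambda^2$ and $a_{11},a_{22}\in[\lambda,\Lambda]$) together with (\ref{uniform elliptic 2}) yields a pointwise inequality of the form
\begin{equation*}
 p_1^2+p_2^2+q_1^2+q_2^2\leq 2K(p_1q_2-p_2q_1),
\end{equation*}
where $K=K(\gamma)\geq 1$. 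Hence $w=(p,q)$ satisfies Definition \ref{def:quasiconformal exterior} in $\mathbb{R}^2\setminus\bar{\Omega}$, i.e.\ $w$ is exterior $K$-quasiconformal there. The hypothesis $|Du|\leq M$ is exactly $|w|\leq M$.

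Once quasiconformality is in hand, the conclusions follow by applying Theorem \ref{thm:Holder of exterior q-c} to $w$ on any $\Omega'\supset\supset\Omega$: we get the uniform H\"older estimate $|Du(x)-Du(y)|\leq C|x-y|^{\alpha}$ on $\mathbb{R}^2\setminus\overline{\Omega'}$, the existence of the limit $Du(\infty):=w(\infty)$, and the decay (\ref{asymptotic of Du}) with $\alpha=K-(K^2-1)^{1/2}$ depending only on $\gamma$ and $C$ depending only on $\gamma, d, M$.

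The only technical step is the verification of the quasiconformal inequality with $K$ determined solely by $\gamma$; this is a quadratic-form computation rather than a genuine obstacle, and the extension from an interior domain to an exterior domain requires no modification since the inequality is pointwise. All the work specific to the exterior setting---handling the point at infinity via a Kelvin transform and removable singularity---has already been carried out in Theorem \ref{thm:Holder of exterior q-c}, so the proof of the theorem reduces to this reduction plus a direct citation of that result.
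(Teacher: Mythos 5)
Your proposal follows the same strategy as the paper: identify $Du$ with an exterior $K$-quasiconformal mapping and invoke Theorem~\ref{thm:Holder of exterior q-c}. There is, however, a sign slip in the reduction. With $p=u_1$, $q=u_2$ one has
\begin{equation*}
p_1q_2-p_2q_1=u_{11}u_{22}-u_{12}^2=\det D^2u,
\end{equation*}
and using the equation $a_{11}p_1+2a_{12}p_2+a_{22}q_2=0$ together with ellipticity (normalizing $\lambda=1$) one finds $a_{22}\,\det D^2u=-\left(a_{11}u_{11}^2+2a_{12}u_{11}u_{12}+a_{22}u_{12}^2\right)\le 0$, so $p_1q_2-p_2q_1\le 0$ everywhere. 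Thus $w=(p,q)=(u_1,u_2)$ does \emph{not} satisfy inequality~(\ref{eq:q-c}) of Definition~\ref{def:quasiconformal exterior}. The correct nonnegative quantity is $J=p_2q_1-p_1q_2$, which is why the paper takes $w=(q,p)=(u_2,u_1)$; one gets
\begin{equation*}
p_1^2+p_2^2\le a_{22}J,\qquad q_1^2+q_2^2\le a_{11}J,\qquad a_{11}+a_{22}\le 1+\gamma,
\end{equation*}
hence $w=(q,p)$ is exterior $K$-quasiconformal with $K=\frac{1+\gamma}{2}$. Since swapping components does not affect $|w(x)-w(y)|$ or $|w(x)-w(\infty)|$, the conclusion for $Du$ is unchanged. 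So the fix is to replace $w=(p,q)$ by $w=(q,p)$ (or $(u_1,-u_2)$); once that is done the proposal coincides with the paper's proof. Also note that the ingredients you cite ($a_{11}a_{22}-a_{12}^2\ge\lambda^2$, $a_{11},a_{22}\in[\lambda,\Lambda]$) are not quite the ones used: the derivation needs only the lower ellipticity bound applied to $Dp$ and $Dq$ together with $a_{11}+a_{22}\le 1+\gamma$.
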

\begin{re}
The results in Theorem \ref{thm:exterior for equation} are also valid for $u\in W^{2,2}(\mathbb{R}^2\setminus\bar{\Omega})$.
\end{re}
\begin{proof}[Proof of Theorem \ref{thm:exterior for equation}]
\indent Assume without loss of generality that $\lambda=1$. Let $p=u_1, q=u_2$. By equation (\ref{eq:L}), (\ref{uniform elliptic 1}) and (\ref{uniform elliptic 2}), we have (see details in \cite{G-T})
$$p^2_1+p^2_2\leq a_{11}p^2_1+2a_{12}p_1p_2+a_{22}p^2_2=a_{22}J, J=p_2q_1-p_1q_2, x\in \mathbb{R}^2\setminus \bar{\Omega}$$
and
$$q^2_1+q^2_2\leq a_{11}J, x\in \mathbb{R}^2\setminus \bar{\Omega}.$$
Noticing that $2\leq a_{11}+a_{22}=1+\Lambda\leq 1+\gamma$, we arrive at
$$p^2_1+p^2_2+q^2_1+q^2_2\leq \left(a_{11}+a_{22}\right)J\leq \left(1+\gamma\right)J, x\in \mathbb{R}^2\setminus \bar{\Omega},$$
which implies that $w=(q,p)$ is exterior $K$-quasiconformal over $\mathbb{R}^2\setminus \bar{\Omega}$ with $K=\frac{1+\gamma}{2}$. Since $|Du|\leq M$ in $\mathbb{R}^2\setminus\bar{\Omega}$, Theorem \ref{thm:Holder of exterior q-c} therefore asserts that for any $\Omega^{\prime}\supset\supset\Omega$,
$$|Du(x)-Du(y)|\leq C|x|^{-\alpha},  x,y\in\mathbb{R}^2\setminus\overline{\Omega^{\prime}}$$
and $Du(x)$ tends to a limit $Du(\infty)=(p(\infty), q(\infty))$ at infinity with
$$|Du(x)-Du(\infty)|\leq C|x|^{-\alpha}, x\in\mathbb{R}^2\setminus\overline{\Omega^{\prime}},$$
where $\alpha$ depends only on $\gamma$, $C$ depends only on $\gamma, d$ and $M$.
\end{proof}
\section{Exterior Bernstein type theorem}
In this section, we give the proof of the exterior Bernstein type theorem, i.e., Theorem \ref{thm:1}. As we remarked before, we don't need the concavity or convexity of $F$.\newline
\indent We find the limit $A$ of the Hessian $D^2u$ at infinity and estimate the decay rate of $|D^2u-A|$ first.
\begin{thm}
 \label{thm:limit of D^2u}
Let $u$ be as in Theorem \ref{thm:1}. Then there exists a symmetric matrix
$A\in\mathbb{R}^{2\times2}$ such that
$$D^2u(x)\to A\text{~as~}|x|\to\infty$$ and
$$|D^2u(x)-A|\leq C|x|^{-\alpha}\text{~as~}|x|\to\infty,$$
which implies
\begin{eqnarray}
 \label{in:u-xAx}
 \left|u(x)-\frac 12 x^{\mathrm{T}}Ax\right|\leq C|x|^{2-\alpha}\text{~as~}|x|\to\infty,
\end{eqnarray}
where $\alpha\in(0,1)$ is a constant depending only on $\lambda$ and $\Lambda$, $C$ is a positive constant depending only on $\lambda$, $\Lambda$, and $M$.
\end{thm}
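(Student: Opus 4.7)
The plan is to differentiate the equation $F(D^2u)=0$ in each coordinate direction, apply Theorem~\ref{thm:exterior for equation} to the resulting linear equations satisfied by the partials $u_k$ $(k=1,2)$, and then integrate the pointwise decay of $D^2u-A$ twice along radial rays from a fixed reference sphere to recover the decay of $u-\tfrac12 x^{\mathrm T}Ax$. Convexity or concavity of $F$ plays no role; only uniform ellipticity and the $C^{1,1}$ hypothesis are needed.

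First I would secure the regularity required to differentiate the equation. Since $F\in C^{1,1}$ is uniformly elliptic in two dimensions and $\|D^2u\|_{L^\infty}\le M$, the Nirenberg $C^{2,\alpha}$ estimate (which in $n=2$ does \emph{not} require concavity of $F$, as already noted in the introduction) yields $u\in C^{2,\alpha}_{\mathrm{loc}}(\mathbb R^2\setminus\bar\Omega)$. Formal differentiation of $F(D^2u)=0$ then produces, for each $k=1,2$, the linear equation
$$a_{ij}(x)\,\partial_{ij}u_k(x)=0,\qquad a_{ij}(x):=F_{ij}(D^2u(x)),$$
whose coefficients are H\"older continuous with the same ellipticity constants $\lambda,\Lambda$ as $F$; Schauder theory then promotes $u$ to $C^{3,\alpha}_{\mathrm{loc}}$, making the differentiation rigorous. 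Since $|Du_k|\le |D^2u|\le M$, Theorem~\ref{thm:exterior for equation} applies to each $u_k$ and produces a limit $Du_k(\infty)\in\mathbb R^2$ with
$$|Du_k(x)-Du_k(\infty)|\le C|x|^{-\alpha},\qquad x\in\mathbb R^2\setminus\overline{\Omega'},$$
where $\alpha$ depends only on $\gamma=\Lambda/\lambda$ and $C$ only on $\gamma$, $d$, $M$. Collecting $k=1,2$ and using that each $D^2u(x)$ is symmetric, I obtain a symmetric $A\in\mathbb R^{2\times 2}$ with $|D^2u(x)-A|\le C|x|^{-\alpha}$.

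For the pointwise estimate (\ref{in:u-xAx}), set $w(x):=u(x)-\tfrac12 x^{\mathrm T}Ax$, so that $|D^2w(x)|\le C|x|^{-\alpha}$ for large $|x|$. Because neither $w$ nor $Dw$ has a limit at infinity in general (the latter grows like $|Ax|$), the Taylor expansion must be anchored at a finite radius rather than at infinity. Choose $R_0$ with $\bar\Omega\subset B_{R_0/2}(0)$, and for $|x|>R_0$ put $x_0:=R_0\,x/|x|$. The segment $\{x_0+t(x-x_0):t\in[0,1]\}$ is radial, lies in the exterior of $\bar\Omega$, and satisfies $|x_0+t(x-x_0)|=R_0+t(|x|-R_0)$. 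Taylor's formula with integral remainder then gives
$$\bigl|w(x)-w(x_0)-Dw(x_0)\cdot(x-x_0)\bigr|\le C|x-x_0|^2\int_0^1\bigl(R_0+t(|x|-R_0)\bigr)^{-\alpha}\,dt\le C|x|^{2-\alpha}.$$
Since $|w(x_0)|$ and $|Dw(x_0)|$ are bounded uniformly in the direction $x/|x|$ (depending only on $R_0$, $M$, $|A|$) and $2-\alpha>1$ absorbs the linear term $Dw(x_0)\cdot(x-x_0)=O(|x|)$, the triangle inequality delivers $|w(x)|\le C|x|^{2-\alpha}$, as desired.

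The principal obstacle I anticipate is the regularity bookkeeping in the first step: because the hypothesis provides only a viscosity solution, one must invoke the two-dimensional Nirenberg estimate (whose key feature here is that it does not demand concavity of $F$) before the coordinatewise differentiation can be justified rigorously. After that, the argument is a direct chain---differentiate the equation, feed the partials $u_k$ into Theorem~\ref{thm:exterior for equation}, and integrate twice along rays anchored at the finite sphere $\partial B_{R_0}$---and each step is quantitative with constants depending only on $\lambda$, $\Lambda$, and $M$.
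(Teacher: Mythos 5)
Your proposal is correct and follows the same route as the paper: obtain $C^{3,\gamma}$ regularity via the Nirenberg and Schauder estimates, differentiate $F(D^2u)=0$ in each coordinate to get $a_{ij}(x)\partial_{ij}u_k=0$, and apply Theorem~\ref{thm:exterior for equation} to each $u_k$ to produce the symmetric limit $A$ with decay rate $|x|^{-\alpha}$. The paper dispatches the final implication $|D^2u-A|\le C|x|^{-\alpha}\Rightarrow|u-\tfrac12 x^{\mathrm T}Ax|\le C|x|^{2-\alpha}$ with a bare ``It follows that,'' a step you usefully make explicit via the Taylor expansion along radial rays anchored at $\partial B_{R_0}$.
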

\begin{re}
If $u\in C^2$, then we don't need $F\in C^{1,1}$ in Theorem \ref{thm:limit of D^2u}.
\end{re}
\begin{proof}[\textit{Proof of Theorem \ref{thm:limit of D^2u}}]
By the virtue of the Nirenberg estimate, we can see that viscosity solutions to the equation (\ref{eq:F(D^2u)}) in $\mathbb{R}^2$ are always $C^{2,\alpha}$ for some $\alpha\in(0,1)$ depending only on the ellipticity constants of $F$.  It follows from $F\in C^{1,1}$ and the Schauder estimate that $u\in C^{3,\gamma}(\mathbb{R}^2\setminus\bar{\Omega})$ for any $\gamma\in(0,1)$.
Then we take derivative with respect to $x_k$ $(k=1,2)$ on both sides of equation (\ref{eq:F(D^2u)}) to obtain
\begin{align}
 \label{eq:aij}
 a_{ij}(x)v_{ij}(x)=0, x\in\mathbb{R}^2\setminus\bar{\Omega},
\end{align}
where $a_{ij}(x)=F_{M_{ij}}\left(D^2u(x)\right)$ and $v(x)=u_k(x)$.
\newline
\indent Since $\|D^2u\|_{L^{\infty}\left(\mathbb{R}^2\setminus\bar{\Omega}\right)}\leq M$, we know $|Dv(x)|\leq M$. Applying Theorem \ref{thm:exterior for equation} to equation (\ref{eq:aij}) in $\mathbb{R}^2\setminus\bar{\Omega}$, we have that $Dv(x)$ tends to a limit $Dv(\infty)$ at infinity and for any $\Omega^{\prime}\supset\supset\Omega$,
$$|Dv(x)-Dv(\infty)|\leq C|x|^{-\alpha}, x\in\mathbb{R}^2\setminus\overline{\Omega^{\prime}}.$$
Then by the arbitrarity of $k$, we conclude that there exists a symmetric matrix $A\in\mathbb{R}^{2\times 2}$ such that $D^2u(x)\to A$ as $|x|\to\infty$ and
$$\left|D^2u(x)-A\right|\leq C|x|^{-\alpha} \text{~as~}|x|\to\infty.$$
It follows that
$$\left|u(x)-\frac 12 x^{\mathrm{T}}Ax\right|\leq C|x|^{2-\alpha}\text{~as~}|x|\to\infty,$$
where $\alpha\in(0,1)$ depends only on $\lambda$ and $\Lambda$, $C>0$ depends only on $\lambda$, $\Lambda$ and $M$.
\end{proof}
\indent Based on Theorem \ref{thm:limit of D^2u}, we will find the finer asymptotic behavior of $u$ by standard arguments. To do this, we need the following three lemmas which are well known. For readers' convenience, we show the proofs of them. Lemma \ref{le:smoothness} gives the higher order estimates. Lemma \ref{le:iterate} and Lemma \ref{le:expansion} are used to determine the linear term, logarithm term and constant term of the asymptotics of $u$.
\begin{lemma}
 \label{le:smoothness}
 Let $\phi$ be a viscosity solution of the equation
 $$F\left(D^2\phi(x)+A\right)=0, x\in\mathbb{R}^2\setminus \bar{B}_1(0),$$
 where $F\in C^{1,1}$ is a fully nonlinear uniformly elliptic operator with ellipticity constants $\lambda$ and $\Lambda$, and $A\in\mathbb{R}^{2\times 2}$ is symmetric matrix, satisfying $F(A)=0$. Suppose that for some constants $\beta>0$ and $\rho<2$,
 $$|\phi(x)|\leq \beta|x|^{\rho}, x\in\mathbb{R}^2\setminus \bar{B}_1(0).$$
 Then there exists some constant $r=r(\beta, \rho)\geq 1$ such that for $k=0,1,2,3$,
 $$\left|D^k\phi(x)\right|\leq C|x|^{\rho-k}, x\in\mathbb{R}^2\setminus \bar{B}_r(0),$$
 where $C$ depends only on $\lambda$, $\Lambda$, $\beta$ and $\rho$.
\end{lemma}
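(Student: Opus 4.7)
The plan is a classical blow-down/rescaling argument that reduces the exterior estimate to a fixed interior regularity estimate on the unit ball. Fix a point $x_0$ with $R := |x_0|$ large, and consider the rescaled function
$$v(y) \;=\; R^{-\rho}\,\phi\bigl(x_0+\tfrac{R}{4}y\bigr),\qquad y\in B_1(0).$$
A direct computation yields $D^2\phi\bigl(x_0+\tfrac{R}{4}y\bigr) = 16R^{\rho-2}\,D^2v(y)$, so setting $\alpha:=16R^{\rho-2}$ and defining the normalized operator $\tilde F(M) := \alpha^{-1}F(\alpha M + A)$, the equation $F(D^2\phi+A)=0$ becomes $\tilde F(D^2 v)=0$ on $B_1(0)$. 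The point of dividing by $\alpha$ is that $\tilde F$ retains the same ellipticity constants $\lambda,\Lambda$ as $F$ and satisfies $\tilde F(0) = \alpha^{-1}F(A)=0$; moreover, since $\rho<2$ implies $\alpha\to0$ as $R\to\infty$, the $C^{1,1}$ seminorm of $\tilde F$ is bounded by that of $F$ for all $R$ sufficiently large.

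Next I extract a uniform $L^\infty$ bound on $v$. Provided $R$ is large enough that $B_{R/4}(x_0)\subset\mathbb{R}^2\setminus\bar B_1(0)$ (for instance $R\ge 2$), the growth hypothesis gives
$$|v(y)| \;\le\; R^{-\rho}\,\beta\,\bigl|x_0+\tfrac{R}{4}y\bigr|^{\rho} \;\le\; \beta\,(5/4)^{\rho} \quad\text{on }B_1(0),$$
a bound independent of $R$. Thus $v$ is a uniformly bounded viscosity solution in $B_1(0)$ of a uniformly elliptic equation with operator of uniformly bounded $C^{1,1}$ norm.

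I now apply interior regularity exactly as the paper does for $u$ itself: the two-dimensional Nirenberg estimate gives $v\in C^{2,\gamma}(B_{3/4})$ with $\gamma$ depending only on $\lambda,\Lambda$; differentiating the equation and invoking the Schauder estimates (using $\tilde F\in C^{1,1}$) promotes this to $\|v\|_{C^3(B_{1/2})}\le C$, with $C$ depending only on $\lambda,\Lambda,\beta,\rho$. Undoing the scaling at the center $y=0$ via $D^k v(0) = (R/4)^k R^{-\rho}\,D^k\phi(x_0)$ yields
$$|D^k\phi(x_0)| \;\le\; C\, R^{\rho-k} \;=\; C\,|x_0|^{\rho-k},\qquad k=0,1,2,3,$$
for every $x_0$ with $|x_0|\ge r$, where $r=r(\beta,\rho)$ is chosen so that the rescaling is admissible and the regularity estimates produce the stated uniform constants.

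The main thing to watch is that the constants from the interior estimates must be independent of $R$. This is exactly what the normalization $\tilde F=\alpha^{-1}F(\alpha\cdot+A)$ is designed for: preserving the ellipticity constants is essential for the Nirenberg $C^{2,\gamma}$ bound to be $R$-uniform, and the fact that $\alpha\to 0$ (since $\rho<2$) makes the $C^{1,1}$ norm of $\tilde F$ nonincreasing in $R$, which is what lets the Schauder step in the bootstrap remain uniform. No other subtlety arises, and the condition $\rho<2$ enters only through this positivity/decay of $\alpha$.
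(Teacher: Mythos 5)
Your proof is correct and takes essentially the same route as the paper: a blow-down/rescaling reducing the exterior estimate to an interior Nirenberg--Schauder estimate on a fixed ball, followed by undoing the scaling. The only cosmetic difference is the choice of normalization: the paper scales $\phi$ by $4/|x|^2$ (so that the Hessian is unchanged and the equation $F(D^2\bar\phi+A)=0$ persists verbatim, with $\|\bar\phi\|_{L^\infty}=O(|x|^{\rho-2})$ small), whereas you scale by $R^{-\rho}$ to make $\|v\|_{L^\infty}=O(1)$ at the cost of passing to the normalized operator $\tilde F(M)=\alpha^{-1}F(\alpha M+A)$ and checking that its ellipticity constants are preserved and its $C^{1,1}$ seminorm stays under control. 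Both normalizations feed the same two interior estimates and produce the same decay $|D^k\phi(x_0)|\le C|x_0|^{\rho-k}$; your version has the small stylistic merit of making explicit why the Schauder constant is uniform in $R$, a point the paper leaves implicit.
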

\begin{proof}
By $F\in C^{1,1}$, the Nirenberg estimate and the Schauder estimate, $\phi(x)\in C^{3,\gamma}$ for any $\gamma\in(0,1)$.

Fix $x\in\mathbb{R}^2\setminus\bar{B}_1(0)$ with $|x|>6$ and let
$$\bar{\phi}(y)=\left(\frac {2}{|x|}\right)^2 \phi\left(x+\frac{|x|}{2}y\right), y\in B_1(0).$$
\indent Since $$F(A)=0$$ and $$F(D^2\bar{\phi}(y)+A)=0, y\in B_1(0),$$ we see that
$$\bar{a}_{ij}(y)\bar{\phi}_{ij}(y)=0, y\in B_1(0),$$
where $\bar{a}_{ij}(y)=\int_0^1F_{M_{ij}}\left(tD^2\bar{\phi}(y)+A\right)\mathrm{d}t$. By the Schauder estimate, we have that for $k=0,1,2,3$,
$$\left|D^k\bar{\phi}(0)\right|\leq\|\bar{\phi}\|_{L^{\infty}(\bar{B}_1(0))}\leq C|x|^{\rho-2},$$
which implies
$$\left|D^k\phi(x)\right|\leq C|x|^{\rho-k},$$
where $C$ depends only on $\lambda$, $\Lambda$, $\beta$ and $\rho$.
\end{proof}
\begin{lemma}
 \label{le:iterate}
Suppose $f(x)=O(|x|^{-\beta})$ as $|x|\to\infty$ with $\beta>1$. Then for any $\varepsilon>0$, the equation
$$\Delta u(x)=f(x)\text{~in~}\mathbb{R}^2\setminus \bar{B}_1(0)$$
has a solution $u(x)=O(|x|^{2-\beta+\varepsilon})$ as $|x|\to\infty$.
\end{lemma}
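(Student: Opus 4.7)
The plan is to construct $u$ explicitly as a sum of modified Newtonian potentials arising from a dyadic annular decomposition of $f$. Because the two-dimensional Newtonian kernel $\log|x-y|$ grows at infinity, a direct convolution with $f$ can at best give a solution with logarithmic growth; however, any harmonic function of $x$ may be added to the kernel without disturbing $\Delta u=f$. The idea is to subtract enough harmonic-polynomial terms from the Taylor expansion of $\log|x-y|$ (in $y$ when $|y|\ll|x|$, and in $x$ when $|x|\ll|y|$) so that, after summing over dyadic annuli, the remainder decays at the sharp rate $|x|^{2-\beta}$ up to a logarithmic loss.

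First I extend $f$ by $0$ to $\bar{B}_1(0)$ and choose a smooth dyadic partition of unity $\sum_{k\ge 0}\eta_k\equiv 1$ on $\{|y|>1\}$ with $\supp(\eta_k)\subset A_k:=\{2^{k-1}<|y|<2^{k+2}\}$, so that $f=\sum_k f_k$ with $f_k:=f\eta_k$ satisfying $|f_k|\le C\,2^{-k\beta}$. Given $\varepsilon>0$, fix a large integer $N=N(\beta,\varepsilon)$. For each $k$, define $u_k$ as the Newtonian potential of $f_k$ corrected by the degree-$N$ truncation of the harmonic-polynomial expansion of $\log|x-y|$ about $y=0$ when $|x|\gg 2^k$, and about $x=0$ when $|x|\ll 2^k$. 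Each subtracted term is harmonic in $x$ on $\mathbb{R}^{2}\setminus\{0\}$, so $\Delta u_k=f_k$ is preserved. The Taylor remainder bounds, combined with $|f_k|\le C\,2^{-k\beta}$ and $|A_k|\sim 2^{2k}$, yield $|u_k(x)|\le C\,2^{k(N+2-\beta)}|x|^{-N}$ when $|x|\gg 2^k$ and $|u_k(x)|\le C\,|x|^{N}\,2^{k(2-\beta-N)}$ when $|x|\ll 2^k$. Choosing $N$ larger than both $\beta-2$ and $2-\beta$ makes the two geometric series in $k$ converge, each contributing $O(|x|^{2-\beta})$. Setting $u:=\sum_k u_k$ and verifying locally uniform convergence on $\{|x|>1\}$ allows termwise differentiation to conclude $\Delta u=f$.

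The main obstacle is the transition regime $2^k\sim|x|$, where neither Taylor expansion of the kernel converges and the raw Newtonian potential must be used. There the unavoidable integral $\int_{A_k}|\log|x-y||\,dy$ produces an extra factor of $\log|x|$, so that $|u_k(x)|\le C\,|x|^{2-\beta}\log|x|$ for the $O(1)$ values of $k$ in this regime. It is precisely this $\log|x|$ that forces the loss of $\varepsilon$ in the stated bound $|u(x)|\le C\,|x|^{2-\beta+\varepsilon}$.
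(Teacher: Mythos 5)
The paper's proof is a one–line explicit construction: it sets
$$u(x)=-\frac{1}{2\pi}\int_{\mathbb{R}^2\setminus\bar B_1(0)}\bigl(\log|x-y|-\log|y|\bigr)f(y)\,dy,$$
i.e.\ it modifies the Newtonian kernel by subtracting the single term $\log|y|$, which is constant in $x$ and so preserves $\Delta u=f$, and then asserts the bound $|u(x)|\le C(\varepsilon)|x|^{2-\beta+\varepsilon}$. No dyadic decomposition and no higher–order corrections are used. Your approach is in the same spirit (subtract harmonic–in–$x$ pieces from the kernel so that the resulting potential decays) and you correctly identify the transition zone $|y|\sim|x|$ as the source of the $\varepsilon$–loss, but the construction you propose has a genuine gap.

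The problem is in the definition of $u_k$. You claim simultaneously that $|u_k(x)|\lesssim 2^{k(N+2-\beta)}|x|^{-N}$ when $|x|\gg 2^k$ and $|u_k(x)|\lesssim|x|^{N}2^{k(2-\beta-N)}$ when $|x|\ll 2^k$, but no fixed choice of harmonic correction produces both bounds. If you subtract the degree-$N$ multipole truncation (expansion in $y$ about $y=0$, each term harmonic in $x$ on $\mathbb{R}^2\setminus\{0\}$), you get the first bound, but the subtracted multipoles blow up like $(2^k/|x|)^{N}$ when $|x|\ll 2^k$, so the tail $\sum_{2^k\gg|x|}u_k(x)$ diverges. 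If instead you subtract the degree-$N$ Taylor truncation in $x$ about $x=0$ (harmonic polynomials in $x$), you get the second bound, but the subtracted polynomials grow like $(|x|/2^k)^{N}$ when $|x|\gg 2^k$, so $\sum_{2^k\ll|x|}u_k(x)$ is of size $|x|^{N}\gg|x|^{2-\beta}$. Choosing the correction according to whether $|x|\gg 2^k$ or $|x|\ll 2^k$ would make $u_k$ depend on the evaluation point $x$, and then $\Delta\bigl(\sum_k u_k\bigr)=f$ no longer holds. So as written the dyadic sum either diverges or has the wrong growth; the plan does not go through.

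The fix is precisely what the paper does: take $N=0$, i.e.\ subtract only the constant (in $x$) term $\log|y|$ from the kernel. With this single correction, for $|y|\gg|x|$ the kernel is $O(|x|/|y|)$, and since $\beta>1$ the corresponding piece of the sum (or of the integral) converges and is $O(|x|^{2-\beta})$; for $|y|\ll|x|$ the kernel is $\log(|x|/|y|)$, giving another $O(|x|^{2-\beta})$; and the transition zone $|y|\sim|x|$ contributes the extra $\log|x|\le C(\varepsilon)|x|^{\varepsilon}$. In particular the degree-$N$ corrections are not just problematic but unnecessary, and once the correction is the same for every $k$ the dyadic decomposition is itself superfluous — one can estimate the paper's single integral directly by splitting the domain at $|y|=|x|/2$ and $|y|=2|x|$.
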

\begin{proof}
Let $$u(x)=-\frac {1}{2\pi}\int\limits_{\mathbb{R}^2\setminus \bar{B}_1(0)}(\log{|x-y|-\log{|y|}})f(y)\mathrm{d}y.$$
Then
$$\Delta u(x)=f(x), x\in\mathbb{R}^2\setminus \bar{B}_1(0)$$
and for any $\varepsilon>0$,
$$\left|u(x)\right|\leq C(\varepsilon)|x|^{2-\beta+\varepsilon}, x\in\mathbb{R}^2\setminus \bar{B}_1(0).$$
\end{proof}
\begin{lemma}
 \label{le:expansion}
 Let $u(x)=O(|x|^{\beta})$ be a smooth solution of $$\Delta u(x)=0, x\in\mathbb{R}^2\setminus \bar{B}_1(0)$$ for some $0<\beta<2$.
 Then
 \begin{equation}
  \label{expansion of u}
  u=b\cdot x+d\log{|x|}+c+O\left(|x|^{-1}\right)\mbox{~as~}|x|\to\infty,
 \end{equation}
 where $b\in\mathbb{R}^2$, $c, d\in\mathbb{R}$. Particularly, for $0<\beta<1$, (\ref{expansion of u}) holds with $b=0$.
\end{lemma}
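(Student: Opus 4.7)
The plan is to apply the classical separation-of-variables expansion of harmonic functions in the angular variable on the exterior. In polar coordinates $(r,\theta)$ on $\{|x|>1\}$, one writes
\[
u(r,\theta)=\sum_{k\ge 0}\bigl[A_k(r)\cos k\theta+B_k(r)\sin k\theta\bigr].
\]
The equation $\Delta u=0$ separates into an Euler equation for each Fourier coefficient, giving $A_0(r)=\alpha_0+\beta_0\log r$ and $A_k(r)=\alpha_k r^k+\beta_k r^{-k}$ for $k\ge 1$ (analogously for $B_k$ with constants $\gamma_k,\delta_k$). Combining the identity $A_k(r)=\tfrac{1}{\pi}\int_0^{2\pi}u(r,\theta)\cos k\theta\,d\theta$ with the growth hypothesis $|u(x)|\le C|x|^\beta$ yields $|A_k(r)|\le 2Cr^\beta$; letting $r\to\infty$ in $A_k(r)=\alpha_k r^k+\beta_k r^{-k}$ forces $\alpha_k=\gamma_k=0$ whenever $k\ge 2$, since $r^k$ dominates $r^\beta$ in that range.

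Setting $c=\alpha_0$, $d=\beta_0$, and $b=(\alpha_1,\gamma_1)$, so that $\alpha_1 r\cos\theta+\gamma_1 r\sin\theta=b\cdot x$, the expansion reduces to
\[
u(x)=c+d\log|x|+b\cdot x+\sum_{k\ge 1}(\beta_k\cos k\theta+\delta_k\sin k\theta)r^{-k},
\]
and the task becomes showing the tail is $O(|x|^{-1})$ as $|x|\to\infty$. I would fix the auxiliary radius $r=2$, at which $u(2,\cdot)\in C^\infty(S^1)$ by the smoothness hypothesis on $u$; its Fourier coefficients are then rapidly decreasing, and since $\alpha_k=\gamma_k=0$ for $k\ge 2$, one obtains $|\beta_k|+|\delta_k|\le C_N 2^k k^{-N}$ for every $N$. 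For $r\ge 4$ the $k\ge 2$ portion of the tail is therefore $O(r^{-2})$ by geometric summation, while the $k=1$ term is plainly $O(r^{-1})$; together these give the claimed asymptotic.

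For the special case $0<\beta<1$, the same growth argument applied to $A_1(r)=\alpha_1 r+\beta_1 r^{-1}$ (and likewise $B_1$) now also kills the linear coefficients: dividing $|A_1(r)|\le 2Cr^\beta$ by $r$ and sending $r\to\infty$ gives $\alpha_1=0$, and similarly $\gamma_1=0$, so $b=0$. The only step carrying real content is the tail estimate, which is routine thanks to the rapid decay of the Fourier coefficients of the smooth function $u(2,\cdot)$; everything else is the standard spherical-harmonic classification of harmonic functions on an annulus.
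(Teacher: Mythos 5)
Your proof is correct, and it takes a genuinely different route from the paper's. The paper passes to the complex derivative $\xi(z)=u_{x_1}-iu_{x_2}$, observes (via the interior gradient estimate) that $\xi$ is holomorphic with growth $O(|z|^{\beta-1})$ so its Laurent series on the exterior has no positive powers, and then integrates term by term, with a short argument that the residue coefficient must be real. You instead expand $u$ directly in a Fourier series in $\theta$, solve the Euler ODEs $A_k''+r^{-1}A_k'-k^2r^{-2}A_k=0$, and use the growth hypothesis through the identity $A_k(r)=\pi^{-1}\int_0^{2\pi}u(r,\theta)\cos k\theta\,d\theta$ to kill the growing modes $r^k$ for $k\ge 2$ (and also for $k=1$ when $\beta<1$). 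Your tail estimate via the Fourier coefficients of $u(2,\cdot)$ is sound, and in fact you do not even need rapid decay: the crude bound $|A_k(2)|\le\sup|u(2,\cdot)|$ already gives $|\beta_k|\le C2^k$, which together with $(2/r)^k\le(2/r)^2(1/2)^{k-2}$ for $r\ge 4$ yields $O(r^{-2})$ for the $k\ge 2$ part. The tradeoffs are modest: the complex-analytic proof is shorter and avoids summation estimates but relies on the two-dimensional holomorphic structure and on controlling $Du$ from the bound on $u$; your separation-of-variables proof is slightly longer but stays entirely real, works directly on $u$ without differentiating the hypothesis, and is the version that generalizes to spherical harmonics in higher dimensions.
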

\begin{proof}
Let $\xi(z)=u_1(x)-iu_2(x), z=x_1+ix_2$. Then $\xi(z)$ is an analytic function in $\mathbb{R}^2\setminus \bar{B}_1(0)$ and the growth of $\xi(z)$ is at most of order $|z|^{\beta-1}$. Since $0<\beta<2$, the Laurent expansion of $\xi(z)$ has the form
\begin{equation}
 \label{laurent expasion}
 \xi(z)=a_0+a_{-1}z^{-1}+a_{-2}z^{-2}+\cdots, z\in\mathbb{R}^2\setminus\bar{B}_1(0),
\end{equation}
where $a_0, a_{-1}, a_{-2}, \cdots$ are all complex numbers. Thus we have
$$Du(x)=D(b\cdot x+c_1)+D(a_{-1}\log{|x|}+c_2)+O(|x|^{-2})\text{~as~}|x|\to\infty,$$
where $b=(\mathrm{Re}~a_0, -\mathrm{Im}~a_0)^{\mathrm{T}}, c_1, c_2\in\mathbb{R}$. Since $\mathrm{Re}\int a_{-1}z^{-1}=\mathrm{Re}(a_{-1}\log{z})$ as a part of expansion of a real function $u$, $a_{-1}$ must be a real number.
Integrating the above, we see that
$$u=b\cdot x+d\log{|x|}+c+O(|x|^{-1})\text{~as~}|x|\to\infty,$$
where $c\in\mathbb{R}, d=a_{-1}\in\mathbb{R}$.\newline
\indent Particularly, for $0<\beta<1$, (\ref{laurent expasion}) holds with $a_0=0$. Therefore, the above equality also holds with $b=0$.
\end{proof}
\begin{proof}[Proof of Theorem \ref{thm:1}]We divide the proof into six steps.
\ \newline
\indent\textit{Step 1. Improving estimate (\ref{in:u-xAx}).}\newline
\indent Let $$\varphi(x)=u(x)-\frac 12 x^{\mathrm{T}}Ax.$$
Then by Theorem \ref{thm:limit of D^2u}, $$\varphi(x)=O(|x|^{2-\alpha})$$ and $\varphi(x)$ satisfies
\begin{equation}
 \label{eq:F(D^2 varphi+A)}
 F(D^2\varphi(x)+A)=0, x\in\mathbb{R}^2\setminus\bar{\Omega}.
\end{equation}
Suppose $R_0\geq 1$ such that $\Omega\subset B_{R_0}(0)$. It follows from Lemma \ref{le:smoothness} that for all $x\in\mathbb{R}^2\setminus\bar{B}_{R_0}(0)$,
\begin{eqnarray}
 \left|D\varphi(x)\right|\leq C|x|^{1-\alpha},
 \left|D^2\varphi(x)\right|\leq C|x|^{-\alpha},
 \left|D^3\varphi(x)\right|\leq C|x|^{-1-\alpha}.
 \label{deri}
\end{eqnarray}
\indent Taking derivative to both sides of equation (\ref{eq:F(D^2 varphi+A)}) with respect to $x_k$ $(k=1,2)$, we know that $\varphi_k$ satisfies equation
\begin{eqnarray}
 a_{ij}(x)\left(\varphi_k(x)\right)_{ij}=0, x\in \mathbb{R}^2\setminus\bar{B}_{R_0}(0),
 \label{eq:linear 1}
\end{eqnarray}
where $a_{ij}(x)=F_{M_{ij}}\left(D^2\varphi(x)+A\right)$. Since it follows from Theorem \ref{thm:limit of D^2u} that $D^2\varphi(x)\to 0$ as $|x|\to\infty$, we know
\begin{equation}
a_{ij}(x)\to F_{M_{ij}}(A)\mbox{~as~}|x|\to\infty.
\end{equation}
Assuming without loss of generality that $F_{M_{ij}}(A)=\delta_{ij}$, then by $F\in C^{1,1}$,
\begin{equation}
\left|\delta_{ij}-a_{ij}\right|\leq C|x|^{-\alpha}
\end{equation}
for some $C>0$. We obtain that for all $x\in\mathbb{R}^2\setminus\bar{B}_{R_0}(0)$,
$$\varphi_k(x)=O(|x|^{1-\alpha})$$ and
\begin{equation}
\label{eq:Delta varphi k}
\Delta(\varphi_k)(x)=\left(\delta_{ij}-a_{ij}(x)\right)(\varphi_k)_{ij}(x)=O(\left|x|^{-\alpha}|x|^{-1-\alpha}\right)=O\left(|x|^{-1-2\alpha}\right). \end{equation}
\indent By Lemma \ref{le:iterate}, for any $0<\varepsilon<\alpha$, there exists $$v(x)=O(|x|^{1-2\alpha+\varepsilon})$$
satisfying the equation (\ref{eq:Delta varphi k}). Then
\begin{equation}
\Delta (\varphi_k-v)(x)=0, x\in\mathbb{R}^2\setminus\bar{B}_{R_0}(0)
\end{equation}
and
$$\varphi_k(x)-v(x)=O(|x|^{1-\alpha}).$$
Therefore Lemma \ref{le:expansion} states
$$\varphi_k(x)-v(x)=d\log{|x|}+c+O\left(|x|^{-1}\right)\mbox{~as~}|x|\to\infty$$
for some $b\in\mathbb{R}^2, c\in\mathbb{R}$. Hence, for $k=1,2$, $$\varphi_k(x)=O(|x|^{1-2\alpha+\varepsilon}).$$
By the arbitrarity of $k$, we see $$\varphi(x)=O(|x|^{2-2\alpha+\varepsilon}).$$
Since $0<\varepsilon<\alpha$, we have improved the estimate (\ref{in:u-xAx}) a little.\newline
\indent We repeat the arguments above $n$ times, where $n$ is determined by the following way. Fix $0<\varepsilon<\alpha$ and let $n$ be an integer such that $0<1-2^n\alpha+(2^n-1)\varepsilon<\frac 18$, i.e. $n=\left[\log_2{\frac{\frac 78-\varepsilon}{\alpha-\varepsilon}}\right]+1$. Then we get an appropriate improved estimate
$$\varphi(x)=O(|x|^{2-2^n\alpha+(2^n-1)\varepsilon})=O(|x|^{1+\delta}), x\in\mathbb{R}^2\setminus\bar{B}_{R_0}(0)$$
with $\delta=1-2^n\alpha+(2^n-1)\varepsilon<\frac 18$.\newline
\indent\textit{Step 2. Determining the linear term.}\newline
\indent We obtain by Lemma \ref{le:smoothness} that for $\delta\in(0,\frac 18)$ and all $x\in\mathbb{R}^2\setminus\bar{B}_{R_0}(0)$,
$$|D\varphi(x)|\leq C|x|^{\delta}, |D^2\varphi(x)|\leq C|x|^{-1+\delta}.$$
Since $\varphi(x)$ satisfies equation
\begin{equation}
 \label{eq:linear 2}
 \bar{a}_{ij}(x)\varphi_{ij}(x)=0, x\in\mathbb{R}^2\setminus\bar{B}_{R_0}(0),
\end{equation}
where $\bar{a}_{ij}(x)=\int_0^1F_{M_{ij}}\left(tD^2\varphi(x)+A\right)\mathrm{d}t$, it follows from $F\in C^{1,1}$ that for some $C>0$,
$$|\bar{a}_{ij}(x)-\delta_{ij}|\leq C|x|^{-1+\delta}.$$
Thus
$$\Delta \varphi(x)=\left(\delta_{ij}-\bar{a}_{ij}(x)\right)\varphi_{ij}(x)=O \left(|x|^{-2+2\delta}\right), x\in\mathbb{R}^2\setminus\bar{B}_{R_0}(0).$$
Then Lemma \ref{le:iterate} implies that for any $\varepsilon\in(0,\frac 18)$, there exists $$v(x)=O(|x|^{2\delta+\varepsilon}),$$ satisfying
$$\Delta(\varphi-v)(x)=0, x\in\mathbb{R}^2\setminus\bar{B}_{R_0}(0).$$
Since $$\varphi(x)-v(x)=O(|x|^{1+\delta}),$$
it follows from Lemma \ref{le:expansion} that there exists $b\in\mathbb{R}^2$ such that
$$\varphi(x)-v(x)=b\cdot x+O(\log{|x|}).$$
Hence $$\varphi(x)=b\cdot x+O(|x|^{2\delta+\varepsilon}).$$
\indent\textit{Step 3. Determining the logarithm term and constant term.}\newline
\indent Let $$\bar{\varphi}(x)=u-\left(\frac 12x^{\mathrm{T}}Ax+b\cdot x\right).$$
Then $$\bar{\varphi}(x)=O(|x|^{2\delta+\varepsilon})$$ and $\bar{\varphi}(x)$ satisfies equation (\ref{eq:linear 2}).\newline
\indent By Lemma \ref{le:smoothness}, we see that for all $x\in\mathbb{R}^2\setminus\bar{B}_{R_0}(0)$,
$$|D\bar{\varphi}(x)|\leq C|x|^{-1+2\delta+\varepsilon}, |D^2\bar{\varphi}(x)|\leq C|x|^{-2+2\delta+\varepsilon}.$$
Consequently, for some $C>0$,
$$|\bar{a}_{ij}-\delta_{ij}|\leq C|x|^{-2+2\delta+\varepsilon}$$
and $$\Delta\bar{\varphi}(x)=(\delta_{ij}-\bar{a}_{ij})\bar{\varphi}_{ij}=O(|x|^{-4+4\delta+2\varepsilon}).$$
Since $\delta, \varepsilon\in (0,\frac 18)$, then by Lemma \ref{le:iterate}, there exists
$$v(x)=O(|x|^{-2+\varepsilon^{\prime}})$$
with $\varepsilon^{\prime}\in(0,1)$, satisfying
$$\Delta(\bar{\varphi}-v)(x)=0$$ and
$$\bar{\varphi}(x)-v(x)=O(|x|^{2\delta+\varepsilon}).$$
Thus, Lemma \ref{le:expansion} leads to
$$\bar{\varphi}(x)=d\log{|x|}+c+O\left(|x|^{-1}\right) \mbox{~as~}|x|\to\infty$$
for some $c, d\in\mathbb{R}$, namely,
\begin{equation}
 \label{asymptotic of u}
  u(x)=\frac 12 x^{\mathrm{T}}Ax+b\cdot x+d\log{|x|}+c+O(|x|^{-1}).
\end{equation}
\indent\textit{Step 4. Determining the $\frac {x}{|x|^2}$ term.}\newline
\indent Let $$\hat{\varphi}(x)=u(x)-\left(\frac 12 x^{\mathrm{T}}Ax+b\cdot x+d\log{|x|}+c\right).$$
Then $$D^2\hat{\varphi}=D^2u-A+O(|x|^{-2}).$$
By (\ref{asymptotic of u}), $D^2u=A+O(|x|^{-2})$, which implies $$\left|D^2\hat{\varphi}\right|=O(|x|^{-2}).$$ Since $\hat{\varphi}(x)$ satisfies equation (\ref{eq:linear 2}) with $\bar{a}_{ij}(x)=\int_0^1F_{M_{ij}}\left(t\left(D^2\hat{\varphi}(x)+D^2(d\log{|x|})\right)+A\right)\mathrm{d}t$, we have that for some $R_0\geq 1$ such that $\Omega\subset B_{R_0}(0)$, $$\Delta\hat{\varphi}(x)=(\bar{a}_{ij}(x)-\delta_{ij})\hat{\varphi}_{ij}(x)=:f(x)=O(|x|^{-2}|x|^{-2})=O(|x|^{-4}), x\in\mathbb{R}^2\setminus\bar{B}_{R_0}(0).$$
\indent Let $\psi(x)=\hat{\varphi}(\frac {x}{|x|^2})$ and $\tilde{f}(x)=f(\frac {x}{|x|^2})$ be the Kelvin transform of $\hat{\varphi}(x)$ and $f(x)$ respectively. Then we see $$\psi(x)=O(|x|)$$ and
$$\Delta\psi(x)=|x|^{-4}\tilde{f}(x)=:g(x)=O(1), x\in B_{\frac{1}{R_0}}(0).$$
\indent From $g\in L^p(B_{1/R_0}(0))$ for any $p>2$, it follows that $\psi(x)\in W^{2,p}(B_{1/R_0}(0))$ and hence $\psi(x)\in C^{1,\alpha}(B_{1/R_0}(0))$ for $\alpha=1-\frac{2}{p}\in(0,1)$. Then there exists $e\in\mathbb{R}^2$ and $\tilde{c}\in\mathbb{R}$ such that for some $C>0$,
$$\left|\psi(x)-(e\cdot x+\tilde{c})\right|\leq C|x|^{1+\alpha},x\in B_{\frac {1}{R_0}}(0).$$
Since $\psi(0)=0$ implies $\tilde{c}=0$, we go back to exterior domain to get
$$\left|\hat{\varphi}(x)-e\cdot\frac{x}{|x|^2}\right|\leq C|x|^{-1-\alpha},x\in\mathbb{R}^2\setminus\bar{B}_{R_0}(0),$$
which leads to
$$u=\frac 12 x^{\mathrm{T}}Ax+b\cdot x+d\log{|x|}+c+e\frac {x}{|x|^2}+O(|x|^{-1-\alpha}).$$
\indent\textit{Step 5. Calculating the value of $d$.}\newline
\indent Let $Q(x)=\frac 12 x^{\mathrm{T}}Ax+b\cdot x+c$. Then $$u(x)=Q(x)+d\log{|x|}+O(|x|^{-1})$$
and
$$\Delta(u-Q)(x)=O(|x|^{-3})$$ is integrable. Let $\nu$ be the unit outward normal of boundaries $\partial\Omega$ and $C_R=\partial B_R(0)$. Then by the divergence theorem, we have that for some $R>0$ large enough,
\begin{equation*}
 \begin{split}
 \iint\limits_{{B_R(0)\setminus\bar{\Omega}}}\Delta(u-Q)(x)\mathrm{d}x_1\mathrm{d}x_2&=\int\limits_{\partial({B_R(0)\setminus\bar{\Omega}})}(u-Q)_{\nu}\mathrm{d}s\\
 &=\int\limits_{C_R}(d\log{|x|}+O(|x|^{-1}))_{\nu}(x)\mathrm{d}s-\int\limits_{\partial\Omega}(u-Q)_{\nu}\mathrm{d}s\\
 &=d\int\limits_{C_R}\frac{x}{|x|^2}\cdot\nu\mathrm{d}s+O\left(\frac 1R\right)-\int\limits_{\partial\Omega}u_{\nu}\mathrm{d}s+\int_{\partial\Omega}Q_{\nu}\mathrm{d}s\\
 &=2\pi d+O\left(\frac 1R\right)-\int\limits_{\partial\Omega}u_{\nu}\mathrm{d}s+\int_{\Omega}\Delta Q\mathrm{d}x\\
 &=2\pi d+O\left(\frac 1R\right)-\int\limits_{\partial\Omega}u_{\nu}\mathrm{d}s+\mathrm{tr}A|\Omega|.
 \end{split}
\end{equation*}
Letting $R\to\infty$, we get (\ref{value of d}).\newline
\indent\textit{Step 6. Improving smoothness of the error.}\newline
\indent Furthermore, suppose $F$ is smooth. Let
$$\tilde{\varphi}(x)=u-\left(\frac 12 x^{\mathrm{T}}Ax+b\cdot x+d\log{|x|}+c+e\frac {x}{|x|^2}\right).$$
Then, the Schauder estimate asserts that for all $k\in\mathbb{N}$,
$$\left|D^k\tilde{\varphi}(x)\right|\leq C(k)|x|^{-1-\alpha-k}.$$
\indent We complete the proof of Theorem \ref{thm:1}.
\end{proof}
\begin{re}(\romannumeral 1). If the equation has some divergence structure, then we can obtain another representation for the constant $d$, for example, the Monge-Amp\`{e}re equations, the special Lagrangian equations and the inverse harmonic Hessian equations. We refer to \cite{C-LI} and \cite{LI-LI-Y} to see details.\newline
\indent(\romannumeral 2). By the virtue of Theorem \ref{thm:1}, we have expansion for the solutions to the Monge-Amp\`{e}re equations, the special Lagrangian equations and the inverse harmonic Hessian equations at infinity in $\mathbb{R}^2\setminus\bar{\Omega}$, namely, any solution tends to a quadratic polynomial plus a logarithm term and $e\frac {x}{|x|^2}$ with the error at least $|x|^{-1-\alpha}$, which is finer than the results in \cite{C-LI} and \cite{LI-LI-Y}.
\end{re}
\bibliographystyle{elsarticle-num}

\end{document}